\newtheorem{thm}{Theorem}[section]
\newtheorem{lem}[thm]{Lemma}
\newtheorem{cor}[thm]{Corollary}
\newtheorem{conj}[thm]{Conjecture}
\theoremstyle{definition}
\newtheorem{ex}[thm]{Example}
\theoremstyle{remark}
\newtheorem{rem}[thm]{Remark}
\numberwithin{equation}{section}
\title[Homology classes of negative square]{Homology classes of negative square and embedded surfaces in 4-manifolds}
\author{M.~J.~D.~Hamilton}
\address{      Institute for Geometry and Topology\\
               University of Stuttgart\\
               Pfaffenwaldring 57\\
               70569 Stuttgart\\
               Germany}
\email{mark.hamilton@math.lmu.de}
\date{\today}
\subjclass[2010]{Primary 57R95; Secondary 57N13, 57R57}
\keywords{4-manifold, homology class, branched covering, minimal genus}
\begin{document}

\begin{abstract} Let $X$ be a simply-connected closed oriented 4-manifold and $A$ an embedded surface of genus $g$ and negative self-intersection $-N$. We show that for fixed genus $g$ there is an upper bound on $N$ if the homology class of $A$ is divisible or characteristic. In particular, for genus zero, there is a lower bound on the self-intersection of embedded spheres in these kinds of homology classes. This question is related to a problem from the Kirby list.
\end{abstract}

\maketitle

\section{Introduction}

A well-known problem in the smooth topology of simply-connected closed oriented 4-manifolds is to determine for a given second integral homology class the minimal genus of a smoothly embedded closed connected oriented surface representing that class. This problem dates back to the work of Kervaire and Milnor \cite{KeMi} and has been studied a lot since the introduction of the Donaldson and Seiberg-Witten invariants. The problem is harder and less is known in the case of homology classes of negative self-intersection (if the manifold does not admit an orientation reversing self-diffeomorphism). This is partly due to the fact that the Seiberg-Witten invariants are sensitive to the orientation of the 4-manifold. If the self-intersection number gets very negative the adjunction inequality \cite{OzSz} from Seiberg-Witten theory reduces to the fact that the genus of the surface is at least zero. For example, the minimal genus problem for the $K3$ surface for classes of non-negative square has been  solved completely \cite{La}. On the other hand, not much is known for classes of negative square. Finashin and Mikhalkin \cite{FM} constructed for any negative even number $n\geq -86$ an embedded sphere in the $K3$ surface with this self-intersection number; see also \cite{Ak}. In general, Problem 4.105 from the Kirby list \cite{Ki} asks if there is a (negative) lower bound on the self-intersection of embedded spheres in a given 4-manifold $X$ (a 4-manifold with non-vanishing Seiberg-Witten invariants does not contain embedded spheres of positive self-intersection). One can also ask the following question: Given a genus $g$, is there a lower bound on the self-intersection number of embedded surfaces of genus $g$ in the 4-manifold $X$?

In this note, we will show that there is a lower bound on the self-intersection number if we fix the type of the homology class $A$ in following sense: We assume that the integral homology class $A$ is divisible by an integer greater than one or that it is characteristic. The proof in both cases uses branched coverings in a similar way to the paper \cite{KM} due to Kotschick and Mati\'c together with the solution of the $\frac{5}{4}$-conjecture due to Furuta \cite{F}. There is also another approach using the $G$-signature theorem \cite{HS,R} that leads to the best results for divisible classes in our situation. The result for divisible classes therefore is implicit in these papers, but it is a worthwhile preparation for the case of a characteristic class. In this case the other ingredient is a lemma that allows to represent a multiple of a given homology class by a surface of a certain genus. The argument here seems to work only with the $\frac{5}{4}$-theorem. 

Note that Kervaire and Milnor have shown that $A^2\equiv \sigma(X)\bmod 16$ if $A$ is a characteristic homology class represented by an embedded sphere, where $\sigma(X)$ denotes the signature of $X$. The following lower bound on $A^2$ is a consequence of Theorem \ref{thm char}.
\begin{cor}
Let $X$ be a simply-connected closed oriented 4-manifold and $A$ a characteristic class represented by an embedded sphere with $A^2<0$. Then
\begin{equation*}
A^2\geq -(4b_2(X)-5\sigma(X)-8).
\end{equation*}
\end{cor}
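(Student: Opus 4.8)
The plan is to obtain this as the genus-zero case of Theorem~\ref{thm char}, and it seems worthwhile to record the mechanism so that the shape of the bound is transparent. Write $A^{2}=-N$ with $N\ge 1$. Since $A$ is characteristic and nonzero it cannot be divisible by $2$, whereas $2A$ is, so one may form a double cover $Y\to X$ branched along an embedded surface representing $2A$; the whole point of choosing $2A$ and of using that $A$ is characteristic is that $Y$ comes out \emph{spin}.

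First I would produce a branch surface of small genus. Take the zero section of the normal bundle of the sphere $A$ together with the graph of a generic section; as this normal bundle has Euler number $A^{2}=-N$, the two parallel copies of $A$ can be arranged to meet transversally in exactly $N$ points, all of the same local sign, and resolving all $N$ double points yields a connected embedded surface $F$ with $[F]=2A$ and, by the Euler characteristic count, genus $g(F)=N-1$; this is exactly the representability lemma of the paper at $g=0$. Next I would pass to the branched double cover $Y$: since $[F]=2A$ with $A$ characteristic, $Y$ is spin, and (because $X$ is simply connected and $F$ connected) $b_{1}(Y)=0$. The signature formula for branched double covers together with $\chi(Y)=2\chi(X)-\chi(F)$ then gives
\[
\sigma(Y)=2\sigma(X)-\tfrac12(2A)^{2}=2\sigma(X)+2N,\qquad b_{2}(Y)=2b_{2}(X)+2N-2 .
\]

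Now I would invoke Furuta's solution of the $\tfrac54$-conjecture \cite{F}. Because $A^{2}<0$ the intersection form of $X$ is not positive definite, so $b_{2}(X)\ge 1$ and hence $b_{2}(Y)\ge 2>0$; and a definite spin $4$-manifold would, by Donaldson's diagonalisation theorem, have diagonalisable and hence odd intersection form, contradicting $b_{2}(Y)>0$, so the form of $Y$ is indefinite. Therefore $b_{2}(Y)\ge\tfrac54|\sigma(Y)|+2$. If $\sigma(Y)\ge 0$, substituting the two displayed identities into this inequality and simplifying yields $N\le 4b_{2}(X)-5\sigma(X)-8$, which is the assertion. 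If instead $\sigma(Y)<0$, i.e.\ $N<-\sigma(X)$, then $A^{2}=-N>\sigma(X)$, so it is enough to observe that $\sigma(X)\ge 5\sigma(X)-4b_{2}(X)+8$, an inequality equivalent to $b_{2}^{-}(X)\ge 1$, which again holds because $A^{2}<0$. In either case $A^{2}\ge-(4b_{2}(X)-5\sigma(X)-8)$.

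The steps I expect to be delicate are the two I have glossed over: the genus count $g(F)=N-1$ for the branch surface — one wants $\chi(F)$ as large as possible so that Furuta's inequality actually bounds $N$ — and the structural facts that the branched cover is spin precisely because $A$ is characteristic and that $b_{1}(Y)=0$. Once those are granted the remainder is arithmetic, and as a formal matter the corollary is just Theorem~\ref{thm char} evaluated at $g=0$ and simplified.
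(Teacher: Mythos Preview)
Your proposal is correct and follows essentially the same route as the paper: the corollary is simply Theorem~\ref{thm char} at $g_A=0$, and your argument reproduces the proof of that theorem in this special case (Lemma~\ref{KrMr} for the genus-$(N-1)$ representative of $2A$, the spin double branched cover, and Furuta's inequality). The only difference is cosmetic: the paper handles the sign of $\sigma(Y)$ in one line via $|2\sigma(X)+2N|\geq 2\sigma(X)+2N$, whereas you split into two cases and deal with $\sigma(Y)<0$ separately; your indefiniteness digression is also not needed, since the paper invokes Furuta under the sole hypothesis $b_2(Y)\neq 0$.
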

The spheres in the example of Finashin and Mikhalkin represent indivisible homology classes in the $K3$ surface. Unfortunately, the method presented here does not give a lower bound on the self-intersection number in this case. In the final section we indicate a strategy how to obtain a bound in the general situation, that depends on an open, yet not completely unrealistic, conjecture.

\section{Branched coverings and the $\frac{5}{4}$-theorem}\label{sect facts}

We first need some preparations. From now on, $X$ will denote a closed oriented simply-connected smooth 4-manifold whose second Betti number is non-zero, so that $X$ is not a homotopy sphere (the results in this paper remain valid if $H_1(X;\mathbb{Z})=0$). All embedded surfaces in the following will be connected, closed, oriented and smoothly embedded. Suppose that $B\subset X$ is such an embedded surface in $X$ of genus $g_B$, whose homology class $[B]\in H_2(X;\mathbb{Z})$ is divisible by a prime power $q=p^r$, where $p$ is a prime and $r\geq 1$ an integer. We can then consider the $q$-fold cyclic branched covering $\phi\colon Y\rightarrow X$, branched over $B$. The second Betti number and the signature $Y$ are known from \cite{Hirz,HS,KM,R}. They are given by:
\begin{align*}
b_2(Y)&=qb_2(X)+2(q-1)g_B\\
\sigma(Y)&=q\sigma(X)-\frac{q^2-1}{3q}B^2.
\end{align*}
The formula for the second Betti number follows from the well-known formula for the Euler characteristic of a branched covering and the fact that the first Betti number of $Y$ is zero. In addition it is known \cite{Na} that $Y$ is spin if and only if the class $(q-1)\frac{1}{q}[B]$ is characteristic, i.e.~the mod 2 reduction of its Poincar\'e dual is equal to the second Stiefel-Whitney class $w_2(X)$. Hence if $q$ is odd, then $Y$ is spin if and only if $X$ is spin. If $q$ is even, then $Y$ is spin if and only if $\frac{1}{q}[B]$ is characteristic. 

In any case, we have the inequality
\begin{equation*}
b_2(Y)\geq|\sigma(Y)|.
\end{equation*}
Suppose that $Y$ is a spin manifold. Under our assumptions the second Betti number of $Y$ is non-zero. In this situation Furuta's theorem \cite{F} on the $\frac{5}{4}$-conjecture applies and shows that 
\begin{equation*}
b_2(Y)\geq \frac{5}{4}|\sigma(Y)|+2.
\end{equation*}

\section{The case of divisible $A$}\label{sect div}

The following theorem can be deduced from results in \cite{HS,R} due to Hsiang-Szczarba and Rohlin.
\begin{thm}\label{thm div0}
Let $X$ be a simply-connected 4-manifold and $A$ an embedded surface in $X$ of genus $g_A$ and self-intersection $A^2=-N<0$. Suppose that the integral homology class $[A]$ is divisible by a prime power $q>1$. Then
\begin{equation}\label{ineq G-sig 2}
N\leq 2(b_2(X)-\sigma(X))+4g_A
\end{equation}
if $q=2$ and
\begin{equation}\label{ineq G-sig q0}
N\leq \frac{2q^2}{q^2-1}(b_2(X)-\sigma(X))+\frac{4q^2}{q^2-1}g_A
\end{equation}
if $q$ is an odd prime power. Hence for fixed genus $g_A$ there is an upper bound on $N$, depending only on $X$ and $q$. For an arbitrary odd prime power $q\geq 3$ we have
\begin{equation}\label{ineq G-sig q1}
N\leq \frac{9}{4}(b_2(X)-\sigma(X))+\frac{9}{2}g_A,
\end{equation}
independent of $q$.
\end{thm}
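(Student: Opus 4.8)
The plan is to apply the cyclic branched covering construction to $A$ itself and combine the resulting topological data with the $G$-signature theorem. Let $\phi\colon Y\to X$ be the $q$-fold cyclic branched cover branched over $B=A$; it exists because $[A]$ is divisible by $q$, and its deck group is generated by an orientation-preserving diffeomorphism $g$ of order $q$. By Section~\ref{sect facts} one has $b_2(Y)=qb_2(X)+2(q-1)g_A$ and $\sigma(Y)=q\sigma(X)+\frac{q^2-1}{3q}N$ (recall $A^2=-N$). For $1\le j\le q-1$ the fixed point set of $g^j$ is the lift $\widetilde A\subset Y$ of $A$, which maps homeomorphically onto $A$, has $[\widetilde A]^2_Y=A^2/q=-N/q$, and on whose complex normal bundle $g^j$ acts by $e^{2\pi ij/q}$. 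If $q=2$, the $(\pm1)$-eigenspace decomposition $H^2(Y;\mathbb R)=H^+\oplus H^-$ is orthogonal with $H^+\cong H^2(X;\mathbb R)$, so $|\sigma(Y)-\sigma(X)|=|\sigma(H^-)|\le\dim H^-=b_2(Y)-b_2(X)=b_2(X)+2g_A$; substituting $\sigma(Y)-\sigma(X)=\sigma(X)+\tfrac{N}{2}$ and separating the two signs gives \eqref{ineq G-sig 2}.

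Assume now that $q$ is odd. Decompose $H^2(Y;\mathbb C)=\bigoplus_{l=0}^{q-1}E_l$ into eigenspaces of $g$, with $E_0=\phi^*H^2(X;\mathbb C)$ of dimension $b_2(X)$. The crucial structural input — this is where simple-connectivity of $X$ enters, and it is essentially what Hsiang--Szczarba \cite{HS} and Rohlin \cite{R} establish — is that the remaining eigenspaces occur with equal multiplicity: $\dim_{\mathbb C}E_l=b_2(X)+2g_A=:B$ for every $l=1,\dots,q-1$. I would prove this by describing $H_*(Y;\mathbb Q)$ as a $\mathbb Q[\mathbb Z/q]$-module via a Mayer--Vietoris argument for $Y=\overline{Y\setminus\widetilde A}\cup\nu(\widetilde A)$, using that $X\setminus A$ has finite cyclic first homology; this is the step I expect to be the real work. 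Grouping $E_l$ with $E_{q-l}$ and passing to real coefficients, $H^2(Y;\mathbb R)=H^2(X;\mathbb R)\oplus\bigoplus_{k=1}^{(q-1)/2}W_k$ orthogonally, where $g$ acts on $W_k$ as the rotation by $2\pi k/q$ and $\dim W_k=2B$. Writing $s_k$ for the signature of the intersection form restricted to $W_k$, we have $|s_k|\le\dim W_k=2B$.

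Finally I would compute the $s_k$. Choosing $g$-invariant maximal positive and negative definite subspaces of $H^2(Y;\mathbb R)$ inside the orthogonal summands and taking traces of $g^j$, one gets $\sigma(g^j,Y)=\sigma(X)+\sum_{k=1}^{(q-1)/2}s_k\cos(2\pi jk/q)$ for all $j$, with the multiplicities $\dim W_k$ cancelling. On the other hand the Atiyah--Singer $G$-signature theorem applied to $g^j$, whose fixed surface $\widetilde A$ has self-intersection $-N/q$ and normal rotation $2\pi j/q$, gives $\sigma(g^j,Y)=\pm[\widetilde A]^2_Y\csc^2(\pi j/q)$, and the sign — which I would pin down using $\sum_{j=1}^{q-1}\sigma(g^j,Y)=q\sigma(X)-\sigma(Y)$ together with $\sum_{j=1}^{q-1}\csc^2(\pi j/q)=\tfrac{q^2-1}{3}$ — makes this $-\tfrac{N}{q}\csc^2(\pi j/q)$. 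Comparing the two expressions, inverting the resulting discrete cosine transform over $\mathbb Z/q$, and using the identity $\sum_{j=1}^{q-1}\csc^2(\pi j/q)\cos(2\pi jk/q)=\tfrac{q^2-1}{3}-2k(q-k)$, one finds
\[
s_k=2\sigma(X)+\frac{4k(q-k)}{q^2}\,N .
\]
Because $k(q-k)$ is largest at $k=\tfrac{q-1}{2}$ among $1\le k\le\tfrac{q-1}{2}$, the strongest constraint is $\bigl|2\sigma(X)+\tfrac{q^2-1}{q^2}N\bigr|=|s_{(q-1)/2}|\le 2B$: if the bracketed quantity is nonnegative this rearranges at once to \eqref{ineq G-sig q0}, while if it is negative then $N<-\tfrac{2q^2}{q^2-1}\sigma(X)$, which is again at most the right-hand side of \eqref{ineq G-sig q0} since $b_2(X),g_A\ge0$. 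For an odd prime power $q=p^r$ the argument is identical in form — $g^j$ still has fixed set $\widetilde A$ with normal rotation $2\pi j/q$, the $G$-signature theorem and both trigonometric identities are statements about $\mathbb Z/q$ irrespective of primality, and the input needed is the analogous multiplicity statement for the $\mathbb Z/p^r$-action — so \eqref{ineq G-sig q0} holds for all odd prime powers. Then \eqref{ineq G-sig q1} follows because $\frac{q^2}{q^2-1}$ decreases in $q$ and equals $\frac{9}{8}$ at $q=3$, whence $\frac{2q^2}{q^2-1}\le\frac{9}{4}$ and $\frac{4q^2}{q^2-1}\le\frac{9}{2}$. The one genuine obstacle is the equidistribution of the nontrivial part of $H^2(Y;\mathbb Q)$ over the $q-1$ nontrivial characters of the deck group; everything else is the classical $G$-signature theorem together with routine linear algebra, two trigonometric sums, and the bookkeeping of a single sign.
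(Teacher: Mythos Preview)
Your proposal is correct and follows the same route the paper takes: the paper simply quotes Rohlin's inequality $g_A\ge\bigl|\tfrac{\sigma(X)}{2}-\tfrac{(q^2-1)A^2}{4q^2}\bigr|-\tfrac{b_2(X)}{2}$ (and its $q=2$ variant) and rearranges, whereas you reconstruct that inequality from scratch via the $G$-signature theorem on the cyclic branched cover---the eigenspace decomposition, the equidimensionality $\dim E_l=b_2(X)+2g_A$, and the explicit formula $s_k=2\sigma(X)+\tfrac{4k(q-k)}{q^2}N$---and then take $k=(q-1)/2$ to recover precisely Rohlin's bound. In other words, the paper's proof is a two-line citation of exactly the machinery you have written out in full; the one step you flag as ``the genuine obstacle'' (equidistribution of the nontrivial characters) is indeed the substance of \cite{HS,R}.
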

\begin{proof}
In \cite{R} it is shown using branched coverings over $X$, branched along $A$, that
\begin{equation*}
g_A\geq \Big\lvert\frac{\sigma(X)}{2} -\frac{A^2}{4}\Big\rvert -\frac{b_2(X)}{2}
\end{equation*}
if $q=2$ and
\begin{equation*}
g_A\geq \Big\lvert \frac{\sigma(X)}{2}-\frac{(q^2-1)A^2}{4q^2}\Big\rvert -\frac{b_2(X)}{2}
\end{equation*}
if $q$ is odd. This implies inequalities \eqref{ineq G-sig 2} and \eqref{ineq G-sig q0}. Suppose $q\geq 3$. With $b_2(X)\geq\sigma(X)$ we have
\begin{align*}
N&\leq\frac{2}{1-\tfrac{1}{q^2}}(b_2(X)-\sigma(X))+\frac{4}{1-\tfrac{1}{q^2}}g_A\\
&\leq \frac{9}{4}(b_2(X)-\sigma(X))+\frac{9}{2}g_A.
\end{align*}
This is inequality \eqref{ineq G-sig q1}.
\end{proof}
Note that the bound for $q$ odd is greater than the one for $q=2$ and tends to this one for large $q$. In any case $q^2$ has to divide $N$. Hence for a given prime power $q$ and genus $g_A$ there is only a small finite set of possible values for $N$. For completeness we also derive a bound using the $\frac{5}{4}$-conjecture. The inequality coming from this conjecture is more crucial in the case of a characteristic class in Section \ref{sect char}.
\begin{thm}\label{thm div}
Let $X$ be a simply-connected 4-manifold and $A$ an embedded surface in $X$ of genus $g_A$ and self-intersection $A^2=-N<0$. Suppose that the integral homology class $[A]$ is divisible by a prime power $q>1$ and let $[C]=\frac{1}{q}[A]$. Assume also one of the following:
\begin{itemize}
\item If $q$ is even, then the class $[C]$ is characteristic.
\item If $q$ is odd, then $X$ is spin.
\end{itemize}
Then
\begin{equation}\label{ineq div}
N\leq \frac{3q^2}{q^2-1}\left(\frac{4}{5}b_2(X)-\sigma(X)-\frac{8}{5q}\right)+\frac{24}{5}\frac{q}{q+1}g_A.
\end{equation}
Hence for fixed genus $g_A$ there is an upper bound on $N$, depending only on $X$ and $q$. If $X$ is spin and $q$ odd, then
\begin{equation}\label{ineq div2}
N\leq \frac{27}{40}(4b_2(X)-5\sigma(X))+\frac{24}{5}g_A,
\end{equation}
independent of $q$.
\end{thm}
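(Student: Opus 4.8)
The plan is to run the $q$-fold cyclic branched covering construction of Section~\ref{sect facts} with $B=A$, $g_B=g_A$, and then to feed the resulting data into the $\tfrac54$-theorem. First I would form the $q$-fold cyclic branched cover $\phi\colon Y\to X$ branched along $A$ and check that $Y$ is spin. By the criterion recalled in Section~\ref{sect facts} (due to Nakamura \cite{Na}), $Y$ is spin when $q$ is odd provided $X$ is spin, and when $q$ is even provided $[C]=\tfrac1q[A]$ is characteristic; these are exactly the two hypotheses in the statement. Since $b_2(X)\neq 0$, we have $b_2(Y)=qb_2(X)+2(q-1)g_A>0$, so Furuta's $\tfrac54$-theorem gives $b_2(Y)\geq\tfrac54|\sigma(Y)|+2$, and in particular $\sigma(Y)\leq\tfrac45\bigl(b_2(Y)-2\bigr)$.

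The second step is pure substitution. Using $A^2=-N$, the formulas from Section~\ref{sect facts} read
\[
b_2(Y)=qb_2(X)+2(q-1)g_A,\qquad \sigma(Y)=q\sigma(X)+\frac{q^2-1}{3q}\,N,
\]
so $\sigma(Y)\leq\tfrac45(b_2(Y)-2)$ becomes
\[
q\sigma(X)+\frac{q^2-1}{3q}\,N\;\leq\;\frac45\Bigl(qb_2(X)+2(q-1)g_A-2\Bigr).
\]
Multiplying through by $\frac{3q}{q^2-1}$, factoring a $q$ out of the $b_2$ and $\sigma$ terms, and simplifying $\frac{3q(q-1)}{q^2-1}=\frac{3q}{q+1}$, one solves for $N$ to obtain exactly \eqref{ineq div}; the finiteness of the right-hand side is the asserted upper bound for fixed $g_A$. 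Only the trivial inequality $\sigma(Y)\leq|\sigma(Y)|$ is used, so no assumption on the sign of $\sigma(X)$ enters at this stage.

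For the $q$-independent estimate \eqref{ineq div2}, suppose $X$ is spin and $q$ is an odd prime power, hence $q\geq 3$. I would first discard the negative summand $-\tfrac{8}{5q}$ inside the bracket of \eqref{ineq div}, then use that $\frac{3q^2}{q^2-1}=3+\frac{3}{q^2-1}$ is strictly decreasing in $q$ and hence at most $\tfrac{27}{8}$, while $\tfrac{q}{q+1}<1$. For the coefficient replacement to be valid one needs $\tfrac45b_2(X)-\sigma(X)\geq 0$, which follows by applying the $\tfrac54$-theorem to $X$ itself (it is spin with $b_2(X)\neq 0$), giving $b_2(X)\geq\tfrac54|\sigma(X)|+2>\tfrac54\sigma(X)$. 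Combining these,
\[
N\;\leq\;\frac{27}{8}\Bigl(\frac45 b_2(X)-\sigma(X)\Bigr)+\frac{24}{5}g_A\;=\;\frac{27}{40}\bigl(4b_2(X)-5\sigma(X)\bigr)+\frac{24}{5}g_A,
\]
which is \eqref{ineq div2}.

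I do not expect a genuine obstacle: the theorem is a direct combination of the branched-cover Betti-number and signature formulas with Furuta's inequality, followed by elementary algebra and one monotonicity observation. The only points demanding care are (i) confirming that $Y$ is spin, which is precisely where the parity hypotheses on $q$ are used, and (ii) tracking signs so that the positive multiple $\frac{q^2-1}{3q}N$ of $N$ lands on the side of the inequality that produces an upper bound.
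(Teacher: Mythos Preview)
Your proof is correct and follows exactly the approach of the paper: form the $q$-fold branched cover with $B=A$, verify it is spin via the parity hypotheses, apply Furuta's inequality, and carry out the algebra (for \eqref{ineq div2} using monotonicity in $q$ together with $4b_2(X)-5\sigma(X)\geq 0$ from Furuta applied to $X$); the paper's own proof is just a terse sketch of these same steps. One trivial slip: the author of \cite{Na} is Nagami, not Nakamura.
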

\begin{proof}
We consider the branched covering $Y\rightarrow X$ as in Section \ref{sect facts} with $B=A$. Note that under our assumptions the manifold $Y$ is spin. Hence Furuta's theorem applies, which leads to inequality \eqref{ineq div}. Suppose that $X$ is spin and $q\geq 3$. An argument similar to the one above leads to inequality \eqref{ineq div2}. Here we have to use that $4b_2(X)-5\sigma(X)$ is non-negative according to the $\frac{5}{4}$-inequality applied to $X$. 
\end{proof}

\begin{ex}
Let $X$ be the $K3$ surface and $A$ a class divisible by two. With $b_2(X)=22$ and $\sigma(X)=-16$ we get from inequality \eqref{ineq G-sig 2}
\begin{equation*}
N\leq 76+4g_A.
\end{equation*}
In particular, for $g_A=0$, we see that there is an upper bound on $N$ of 76 for homology classes divisible by 2 that are represented by an embedded sphere. We can use inequality \eqref{ineq G-sig q1} to derive the following upper bound on $N$ if the class $A$ is divisible by an odd prime:
\begin{equation*}
N\leq 85.5 +4.5g_A.
\end{equation*}
\end{ex}

\section{The case of characteristic $A$}\label{sect char}

We begin with the following lemma:
\begin{lem}\label{KrMr}
Let $A$ be an embedded surface in $X$ of genus $g_A$ with self-intersection $A^2=-N<0$. Then the class $[B]=2[A]$ can be represented by a surface $B$ of genus $g_B=N-1+2g_A$.
\end{lem}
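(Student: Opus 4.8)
The plan is to construct $B$ as a suitably smoothed version of the union of two parallel copies of $A$, with the self-intersections resolved to raise the genus in a controlled way. First I would take two disjoint parallel copies $A_1, A_2$ of $A$ inside a tubular neighborhood of $A$; since the normal bundle of $A$ has Euler number $A^2 = -N$, a pushed-off copy $A_2$ meets $A_1=A$ transversally in exactly $|A^2| = N$ points, all of the same sign (the sign of $-N$, hence negative, though the sign is irrelevant for smoothing). Thus $A_1 \cup A_2$ is an immersed surface representing $2[A]$ with $N$ transverse double points and total genus $2g_A$ (as a disjoint union of two genus-$g_A$ surfaces before resolving).

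Next I would resolve each of the $N$ double points by the standard oriented smoothing (replacing a neighborhood $\{xy=0\}$ of a crossing by $\{xy=\epsilon\}$), which replaces two transversally intersecting sheets by an embedded annulus. This yields a connected embedded surface $B$ representing $[B]=2[A]$. It remains to compute the genus: resolving one double point of a two-component immersed surface first merges the two components into one (this happens at the first resolution) and thereafter each further resolution adds a handle. Concretely, the Euler characteristic changes by $\chi(B) = \chi(A_1) + \chi(A_2) - 2N$ — each oriented resolution removes two points and glues in an annulus, dropping $\chi$ by $2$ — so $\chi(B) = 2(2-2g_A) - 2N = 4 - 4g_A - 2N$. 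Since $B$ is connected, $\chi(B) = 2 - 2g_B$, giving $g_B = g_A + g_A + N - 1$, i.e. $g_B = N - 1 + 2g_A$, as claimed.

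The one genuine point to check — the main (minor) obstacle — is the connectivity of $B$: the count $\chi(B)=2-2g_B$ only gives the stated genus if $B$ is connected, so I must verify that at least one resolution actually joins $A_1$ to $A_2$. Since $N \geq 1$ (as $A^2 = -N < 0$), there is at least one intersection point, and resolving an intersection point between two distinct components necessarily connects them; all subsequent resolutions are of self-intersections of the now-connected surface and only add genus. (If one preferred, one could instead cite the analogous construction in Kronheimer–Mrowka, from which the lemma takes its label, but the direct argument above is self-contained.) This completes the proof.
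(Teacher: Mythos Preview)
Your argument is correct and is essentially the same as the paper's: the paper also takes a section $A'$ of the normal bundle with exactly $N$ transverse zeroes and smooths the intersections with $A$, citing Kronheimer--Mrowka for the general statement. Your version is simply more explicit, spelling out the Euler-characteristic computation and the connectivity check that the paper leaves to the reader.
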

\begin{proof} The proof is elementary: Let $A'$ be a section of the normal disk bundle of $A$, considered as a tubular neighbourhood, having precisely $N$ transverse zeroes. Smoothing the intersections between $A'$ and $A$ results in the surface $B$. The claim also follows from the more general Lemma 7.7 in \cite{KrMr1} under switching the orientation of $X$.
\end{proof}
We can now prove the main theorem in this section.
\begin{thm}\label{thm char}
Let $X$ be a simply-connected 4-manifold and $A$ an embedded surface in $X$ of genus $g_A$ with self-intersection $A^2=-N<0$ such that the class $[A]$ is characteristic. Then
\begin{equation}\label{ineq char}
N\leq 4b_2(X)-5\sigma(X)-8+8g_A.
\end{equation}
Hence for fixed genus $g_A$ there is an upper bound on $N$, depending only on $X$.
\end{thm}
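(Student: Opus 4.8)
The plan is to reduce the characteristic case to the divisible case via Lemma~\ref{KrMr}. Since $[A]$ is characteristic, the class $[B]=2[A]$ is divisible by the prime power $q=2$, and by Lemma~\ref{KrMr} it is represented by an embedded surface $B$ of genus $g_B=N-1+2g_A$. Moreover $B^2=(2A)^2=4A^2=-4N$, so $B$ is an embedded surface of negative self-intersection whose homology class is divisible by $2$. To apply the $\tfrac54$-theorem as in Section~\ref{sect facts} we need the $2$-fold cyclic branched cover $Y\to X$ along $B$ to be spin; by Nakamura's criterion quoted there this holds precisely when $\tfrac12[B]=[A]$ is characteristic, which is exactly our hypothesis. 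So $Y$ is spin and, since $b_2(Y)>0$ under the standing assumptions, Furuta's inequality $b_2(Y)\geq \tfrac54|\sigma(Y)|+2$ applies.

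Next I would substitute the branched-cover formulas from Section~\ref{sect facts} with $q=2$: we get
\begin{align*}
b_2(Y)&=2b_2(X)+2g_B=2b_2(X)+2(N-1+2g_A),\\
\sigma(Y)&=2\sigma(X)-\frac{3}{6}B^2=2\sigma(X)+2N.
\end{align*}
I expect $\sigma(Y)$ to be negative here (this is the heart of why negative-square classes behave well: the branched cover has very negative signature), so $|\sigma(Y)|=-\sigma(Y)=-2\sigma(X)-2N$. Feeding these into Furuta's inequality gives
\begin{equation*}
2b_2(X)+2N-2+4g_A\ \geq\ \frac{5}{4}\bigl(-2\sigma(X)-2N\bigr)+2,
\end{equation*}
which I would then solve for $N$. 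Collecting the $N$ terms on one side yields $\tfrac{9}{2}N \leq 4b_2(X)+5\sigma(X)\cdot(-1)\cdots$ — rearranging should produce exactly $N\leq 4b_2(X)-5\sigma(X)-8+8g_A$, which is \eqref{ineq char}.

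The one genuine subtlety is justifying the sign assumption $\sigma(Y)\leq 0$ (equivalently $\sigma(Y)=2\sigma(X)+2N < 0$, i.e.\ $N<-\sigma(X)$), since otherwise $|\sigma(Y)|=\sigma(Y)$ and Furuta's inequality points the ``wrong'' way. I would handle this by a short case distinction: if $\sigma(Y)\geq 0$, then $b_2(Y)\geq \tfrac54\sigma(Y)+2$ instead gives $2b_2(X)+2N-2+4g_A\geq \tfrac54(2\sigma(X)+2N)+2$, i.e.\ $\tfrac12 N\leq 2b_2(X)-\tfrac{5}{2}\sigma(X)-\cdots$, an even stronger bound on $N$ (roughly $N$ bounded by $4b_2(X)-5\sigma(X)$ with a smaller coefficient); combining the two cases still yields \eqref{ineq char}. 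Alternatively, and more cleanly, one notes that if $N\geq -\sigma(X)$ then the desired inequality $N\leq 4b_2(X)-5\sigma(X)-8+8g_A$ would follow already from $4b_2(X)-4\sigma(X)-8\geq 0$ together with $-\sigma(X)\leq \ldots$; but I would rather just run the branched-cover estimate in both sign cases, since the arithmetic is routine once the inequality from Furuta's theorem is written down. The main obstacle is thus bookkeeping the absolute value correctly, not any deep input.
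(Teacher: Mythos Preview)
Your approach is identical to the paper's: double $[A]$ via Lemma~\ref{KrMr}, take the $2$-fold branched cover (spin since $\tfrac12[B]=[A]$ is characteristic), and apply Furuta's inequality. However, your handling of the absolute value is both overcomplicated and contains a genuine error.

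Your sign intuition is backwards: $\sigma(Y)=2\sigma(X)+2N$ becomes \emph{large positive} when $N$ is large, not negative. Accordingly, in your ``main case'' $\sigma(Y)<0$ the arithmetic goes the wrong way: from $2b_2(X)+2N-2+4g_A\geq \tfrac54(-2\sigma(X)-2N)+2$ one gets $\tfrac{9}{2}N\geq -2b_2(X)-\tfrac52\sigma(X)+4-4g_A$, a \emph{lower} bound on $N$, not the upper bound $\tfrac{9}{2}N\leq\cdots$ you claim. What you label the ``alternate case'' $\sigma(Y)\geq 0$ is in fact the only relevant computation, and it yields exactly \eqref{ineq char}, not a stronger bound as you suggest.

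The paper avoids all of this with one line: since $|x|\geq x$ for every real $x$, Furuta's theorem gives
\[
2b_2(X)+2g_B\ \geq\ \tfrac54|\sigma(Y)|+2\ \geq\ \tfrac54(2\sigma(X)+2N)+2,
\]
and rearranging produces \eqref{ineq char} directly. No case distinction is needed.
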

\begin{proof}
Let $[B]=2[A]$. According to Lemma \ref{KrMr}, the class $[B]$ is represented by a surface $B$ of genus $g_B=N-1+2g_A$. Let $Y\rightarrow X$ be the 2-fold branched covering, branched over $B$. Then $Y$ is spin and Furuta's theorem applies. We get:
\begin{equation*}
2b_2(X)+2g_B\geq \frac{5}{4}|2\sigma(X)+2N|+2\geq \frac{5}{4}(2\sigma(X)+2N)+2.
\end{equation*}
This implies inequality \eqref{ineq char}.
\end{proof}

\begin{rem}
Note that this idea only works for the 2-fold branched covering and not for higher degrees with the generalization of Lemma \ref{KrMr} due to the $q$-coefficients in the $\frac{5}{4}$-inequality. This is still the case if we replace the $\frac{5}{4}$-theorem by the $\frac{11}{8}$-conjecture. The argument does not work with the inequality $b_2(Y)\geq |\sigma(Y)|$ nor with the $G$-signature theorem.
\end{rem}

According to Kervaire and Milnor \cite{KeMi} a characteristic homology class $A$ represented by a sphere has to satisfy
\begin{equation*}
N=-A^2\equiv -\sigma(X) \bmod 16.
\end{equation*}
Together with the inequality in Theorem \ref{thm char} this can be used to restrict the possible self-intersection numbers of characteristic homology classes represented by spheres.
\begin{ex}
Let $X=\mathbb{CP}^2\#k{\overline{\mathbb{CP}}{}^{2}}$ with $k\geq 1$ and $A$ a characteristic homology class of self-intersection $A^2=-N<0$ represented by a sphere. By Theorem \ref{thm char}
\begin{equation*}
N\leq 9(k-1)
\end{equation*}
and according to Kervaire and Milnor
\begin{equation*}
N\equiv k-1\bmod 16.
\end{equation*}
Hence if $k=2$ then $N=1$, if $k=3$ then $N=2$ or $18$, and so on; compare with \cite{LiLi3}.
\end{ex}

\section{The case of indivisible $A$}

We consider the following conjecture, motivated by the results in Section \ref{sect div}.
\begin{conj}\label{conj 1}
Let $X$ be a simply-connected 4-manifold and $B$ an embedded surface in $X$ of genus $g_B$ with self-intersection $B^2=-M<0$ such that the class $[B]$ is divisible by $2$. Then
\begin{equation*}
M\leq c(X)+\kappa g_B,
\end{equation*}
where $c(X)$ is a number that depends only on $X$ and $\kappa$ is a number less than 4.
\end{conj}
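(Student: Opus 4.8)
The plan is to run the branched-covering machinery of Section~\ref{sect facts} directly on $B$. Write $[B]=2[C]$ and let $\phi\colon Y\to X$ be the double covering branched over a genus-$g_B$ surface representing $[B]$, so that $b_2(Y)=2b_2(X)+2g_B$ and $\sigma(Y)=2\sigma(X)+\tfrac12 M$. By the spin criterion recalled in Section~\ref{sect facts}, $Y$ is spin exactly when $[C]$ is characteristic, and in that case the conjecture is already a theorem: it is Theorem~\ref{thm div} with $q=2$ and $A=B$, which yields
\begin{equation*}
M\leq \tfrac{16}{5}b_2(X)-4\sigma(X)-\tfrac{16}{5}+\tfrac{16}{5}\,g_B,
\end{equation*}
so that $\kappa=\tfrac{16}{5}<4$ works. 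The whole problem therefore sits in the case where $[C]$ is not characteristic, i.e.\ $Y$ is non-spin: there Furuta's theorem does not apply, and the only available estimate $b_2(Y)\geq|\sigma(Y)|$ gives back precisely the coefficient $\kappa=4$ of Theorem~\ref{thm div0}, which is too weak.

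For the non-spin case I would look for a replacement of Furuta's $\tfrac54$-inequality that does not need a spin structure. No bound of the shape $b_2\geq c\,|\sigma|+d$ with $c>1$ can hold for arbitrary non-spin $4$-manifolds --- the connected sums $m\,\mathbb{CP}^2\#\overline{\mathbb{CP}}{}^2$ already rule it out --- so one has to exploit that $Y$ is a branched cover and not a generic non-spin manifold. Two routes look reasonable. First: equip $Y$ with a carefully chosen spin$^c$ structure --- one whose $c_1$ is controlled, since away from the ramification locus it is pulled back from $X$ --- and feed it into a Bauer--Furuta / $\tfrac{10}{8}$-type estimate for spin$^c$ manifolds (in the spirit of Furuta--Kametani and later refinements); the point would be that the correction term built from $c_1^2$ stays small enough to leave the $g_B$-coefficient below $4$. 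Second: build a spin manifold out of $X$ directly, by taking an embedded surface $F$ in a characteristic class, of minimal genus $g_F$ and meeting $B$ as transversally as possible, and forming the $(\mathbb{Z}/2\times\mathbb{Z}/2)$-branched covering of $X$ along $B\cup F$; the abelian analogue of the spin criterion of Section~\ref{sect facts} should let one arrange the total space to be spin, after which Furuta applies and one bookkeeps how $g_B$, $g_F$ and $[B]\cdot[F]$ enter $b_2$ and $\sigma$.

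The hard part is quantitative rather than qualitative. In both routes one must keep the auxiliary data --- the genus $g_F$, the intersection number $[B]\cdot[F]$, the $c_1^2$-correction --- tame enough that after all substitutions the coefficient of $g_B$ is still strictly less than $4$; a crude choice lands straight back at $\kappa=4$. A further obstruction shows up when $X$ itself is non-spin: the obvious device of applying Lemma~\ref{KrMr} to $B$ to pass to the even class $2[B]$ makes the genus of the new surface grow linearly in $M$, and the bound it produces then has $g_B$-coefficient $8$, not something $<4$. Finding an argument that dispenses with the spin hypothesis on $X$ without spoiling the coefficient is, I expect, the real crux, and is presumably why only a conjecture is stated here.
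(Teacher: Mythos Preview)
The statement is a \emph{conjecture}, not a theorem: the paper does not prove it, and your proposal correctly recognises this. Your reduction of the problem is exactly the one the paper has in mind. You verify the special case where $[C]=\tfrac12[B]$ is characteristic via Theorem~\ref{thm div} with $q=2$, obtaining $\kappa=\tfrac{16}{5}$, and you isolate the obstruction in the non-spin case as the absence of a Furuta-type inequality. This matches the paper's own commentary in the Remark following Theorem~5.2: the $\tfrac54$-theorem gives $\kappa=\tfrac{16}{5}<4$ only when $\tfrac12[B]$ is characteristic, while both $b_2(Y)\geq|\sigma(Y)|$ and the $G$-signature approach yield $\kappa=4$, which is exactly too weak.

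Your two suggested routes (a spin$^c$ Bauer--Furuta estimate with controlled $c_1$, or a bi-branched cover along $B\cup F$ with $F$ characteristic to force a spin total space) are reasonable strategies, but as you yourself note, the quantitative bookkeeping in each is the real difficulty, and neither is carried to a conclusion. So this is not a proof, nor does the paper claim one; your write-up is an accurate diagnosis of why the statement is posed as a conjecture.
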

Assuming that this is true we can derive a bound on $A^2=-N$ without any assumptions on the class $[A]$.
\begin{thm}
Assume that Conjecture \ref{conj 1} holds. Let $X$ be a simply-connected 4-manifold and $A$ an embedded surface in $X$ of genus $g_A$ with self-intersection number $A^2=-N<0$. Then
\begin{equation}\label{ineq conj}
N\leq \frac{1}{4-\kappa}(c(X)+\kappa(2g_A-1)).
\end{equation}
\end{thm}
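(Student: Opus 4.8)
The plan is to reduce the general case to the divisible case handled by Conjecture \ref{conj 1}, using Lemma \ref{KrMr} as the bridge. Given the embedded surface $A$ of genus $g_A$ with $A^2=-N<0$, consider the class $[B]=2[A]$, which is by construction divisible by $2$. Its self-intersection is $B^2=(2[A])^2=4A^2=-4N$, so in the notation of Conjecture \ref{conj 1} we have $M=4N>0$. By Lemma \ref{KrMr}, the class $[B]=2[A]$ is represented by an embedded surface $B$ of genus $g_B=N-1+2g_A$.

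Next I would simply feed these data into the conjectural bound. Conjecture \ref{conj 1} applies to the surface $B$ (it has negative square and divisible-by-$2$ homology class) and gives
\begin{equation*}
M\leq c(X)+\kappa g_B.
\end{equation*}
Substituting $M=4N$ and $g_B=N-1+2g_A$ yields
\begin{equation*}
4N\leq c(X)+\kappa\bigl(N-1+2g_A\bigr).
\end{equation*}

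Finally I would solve for $N$. Moving the $\kappa N$ term to the left gives $(4-\kappa)N\leq c(X)+\kappa(2g_A-1)$, and since $\kappa<4$ by hypothesis, dividing by the positive quantity $4-\kappa$ produces inequality \eqref{ineq conj}. There is no real obstacle here: the only thing to check is that the hypotheses of Conjecture \ref{conj 1} and of Lemma \ref{KrMr} are genuinely met by $B$ — namely that $B^2<0$ (clear, as $B^2=-4N$) and that $[B]$ is divisible by $2$ (clear, as $[B]=2[A]$) — and that the genus formula $g_B=N-1+2g_A$ of Lemma \ref{KrMr} is applied with the correct value $N=-A^2$. The "hard part," such as it is, lies entirely in Conjecture \ref{conj 1} itself, which this theorem takes as a hypothesis.
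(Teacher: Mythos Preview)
Your proof is correct and follows exactly the same route as the paper: set $[B]=2[A]$, invoke Lemma~\ref{KrMr} to get $g_B=N-1+2g_A$, compute $M=-B^2=4N$, apply Conjecture~\ref{conj 1}, and solve for $N$ using $\kappa<4$. You have in fact written out the final algebraic step more explicitly than the paper does.
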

\begin{proof}
The argument is a generalization of the proof of Theorem \ref{thm char}. Let $[B]=2[A]$. According to Lemma \ref{KrMr}, the class $[B]$ is represented by a surface $B$ of genus $g_B=N-1+2g_A$. We have $M=-B^2=4N$. Hence Conjecture \ref{conj 1} implies
\begin{equation*}
4N\leq c(X)+\kappa(N-1+2g_A)
\end{equation*}
This leads to inequality \eqref{ineq conj}.
\end{proof}
\begin{rem}
The inequality $b_2(Y)\geq |\sigma(Y)|$ and the inequality from the $G$-signature theorem have $\kappa=4$. The $\frac{5}{4}$-conjecture has $\kappa=\frac{16}{5}<4$, but only if $\frac{1}{2}[B]$ is characteristic. This leads to the same bound as in Theorem \ref{thm char}. We could also formulate a conjecture if the class $[B]$ is divisible by some other positive integer $d$. If $[B]=d[A]$ with $A^2=-N<0$ and $A$ is a surface of genus $g_A$, then $[B]$ is represented according to Lemma 7.7 in \cite{KrMr1} by a surface of genus
\begin{equation*}
g_B=\frac{1}{2}d(d-1)N+1-d+dg_A.
\end{equation*}
In the conjecture we have to replace $\kappa<4$ by $\kappa<\frac{2d}{d-1}$. However, for $d>2$, this conjecture tends to get unrealistic.
\end{rem}

\bibliographystyle{amsplain}

\end{document}